\newtheorem{theorem}{Theorem}[section]
\newtheorem{lemma}[theorem]{Lemma}
\theoremstyle{definition}
\newtheorem{definition}[theorem]{Definition}
\def\R{\mathbbm{R}}
\def\D{\mathbbm{D}}
\def\C{\mathbbm{C}}
\def\s{\mathbbm{S}}
\def\B{\mathbbm{B}}
\def\0{\underline{0}}
\def\a{\alpha}
\def\e{\varepsilon}
\begin{document}

\title{Milnor Fibrations and  the Thom Property for maps $f \bar g$}
\author{Anne Pichon and  Jos\'e Seade }
\thanks{{Subject Classification: 32S55, 32C05, 57Q45.} \\
{ Keywords: Whitney stratifications, Thom $a_f$ property,
real singularities, Milnor fibrations. \\
Research partially supported by CNRS (France) and  CONACYT (Mexico) through the Laboratoire International Associ\'e Solomon Lefschetz (LAISLA) and the ECOS-ANUIES program. }
}
\maketitle
\vskip0,2cm\noindent

\begin{abstract}
 We prove that every map-germ ${f \bar g}: (\C^n,\0) {\to}(\C,0)$ with an isolated critical value at $0$ has the Thom $a_{f \bar g}$-property. This extends Hironaka's theorem for holomorphic mappings
 to  the case of map-germs $f \bar g$ and it implies that  every such map-germ has a Milnor-L\^e fibration
defined on a Milnor tube.  One thus has a locally trivial fibration $\phi: \mathbb S_\e \setminus K \to \mathbb S^1$ for every sufficiently small sphere around $\0$,
 where $K$ is the link of $f \bar g$ and in a neighbourhood of $K$ the projection map  $\phi$ is given by $f \bar g / \vert f \bar g\vert$.
\end{abstract}

\section*{Introduction}

 Soon after J. Milnor published his  book \cite{Mi2}, there were several interesting articles  about Milnor fibrations for real singularities published
 by various people, as for instance by E. Looijenga, P. T. Church and K. Lamotke, N. A'Campo, B. Perron,  L. Kauffman and W. Neumann, A. Jacquemard  and others. More recently, there has been a new wave of interest in the topic and a number of articles have been published by various authors (see for instance \cite {AC, BP, BPS, CSS2, RT, Massey, Oka2, Pichon, PS2, RS, RSV,  Se3}).

Unlike the fibration theorem for complex singularities, which holds for every map-germ $(\C^n,0) \to (\C,0)$, in the real case one needs to impose stringent conditions  to get a fibration on a ``Milnor tube'', or a fibration on a sphere, as in the holomorphic case.

In \cite{PS2} we observed that L\^e's arguments in \cite{Le1} for holomorphic mappings extend to every real analytic map germ $(\R^n,\0) \to (\R^p,0)$, $n > p$, with an isolated critical value, provided it has the Thom $a_f$-property and  $V:= f^{-1}(0)$ has dimension more than 0. Hence one has in that setting a Milnor-L\^e fibration:
$$f: N(\e,\delta)  \to \D_{\delta} \setminus \{0\}\;.$$
Here $N(\e,\delta)$ denotes a ``solid Milnor tube'': it is the intersection $f^{-1}(\D_\delta \setminus \{0\}) \cap \B_\e$, where  $\B_\e$ is a sufficiently small ball around $\0 \in \R^n$ and $\D_{\delta}$ is a ball in $\R^p$ of radius small enough with respect to $\e$. This was later completed in \cite{CSS2} (see also \cite{RT}), giving necessary and sufficient conditions for one such map-germ to define a Milnor fibration on every small sphere around the origin, with projection map $f/\vert f \vert $.

Then, an interesting problem is finding families of map germs $(\R^n,\0) \to (\R^p,0)$, $n > p$, having an isolated critical value and the Thom property. This is even better when the given families further have a rich geometry one can use in order to study the  topology of the corresponding Milnor fibrations (cf. \cite{BPS}).

In this article we prove:

\medskip
\noindent
{\bf Theorem.} {\it Let  $f, g$ be holomorphic map germs $(\C^n,\0) \to (\C,0)$ such that the map $f \bar g$ has an isolated critical value at $0 \in \C$. Then $f \bar g$ has the  Thom $a_{f \bar g}$-property.  }

\medskip
In fact our proof  is of a local nature and therefore extends, with  same proof, to the case
of holomorphic map-germs defined on a complex analytic variety $X$ with an isolated singularity. This result  generalizes to higher dimensions the corresponding theorem in \cite{PS2} for $n=2$, and it has the following
 corollaries:

\medskip
\noindent
{\bf Corollary 1.}
 {\it
 Let  $f,g$ be holomorphic map-germs defined on a complex analytic variety $X$ with an isolated singularity at a point $\0$, such that the germ $ f \bar g$ has an isolated critical value at $0$.
Then one has a locally trivial fibration $$\qquad \qquad \qquad \qquad \qquad \qquad \quad  N(\e,\delta) \buildrel {f} \over {\longrightarrow} \D_\delta \setminus \{0\} \,, \quad \e \gg \delta> 0 \; \hbox{sufficiently small} \,,
$$
where  $N(\e,\delta) := [(f \bar g)^{-1}(\D_\delta \setminus \{0\}) \cap \B_\e]  $
is a solid Milnor tube for $f \bar g$.
}

\medskip
\noindent
{\bf Corollary 2.}
 {\it
Let ${\mathcal L}_X:= X \cap \s_\e$ be the link of $X$, 
$V := {( f \bar g)}^{-1}(0)$ and  ${\mathcal L}_V:= {\mathcal L}_X \cap V$ be the link of $V$.
Then  one has a locally trivial fibration,
$$ \phi: {\mathcal L}_X \setminus {\mathcal L}_V \longrightarrow \s^1 \,,$$
which restricted to  ${\mathcal L}_X \cap  N(\e,\delta)$  is the natural projection $\phi = \frac{ f \bar g}{\vert  f \bar g \vert}$. }

\medskip
 In fact we know from \cite{PS2} that for $n=2$ the projection map $\phi$ in Corollary 2 
can be taken to be $\frac{f \bar g}{\vert f \bar g \vert}$ everywhere on ${\mathcal L}_X \setminus {\mathcal L}_V$, not only near the link of $V$. It would be interesting to know whether or not this statement holds also in higher dimensions.
  By \cite{CSS2}, this is equivalent to asking whether  all germs $f \bar g$ are $d$-regular (we refer to \cite{CSS2} for the definition); this is so when $n=2$, by \cite{PS2} and \cite{BP}.

We notice too that holomorphic map-germs actually have the stronger strict Thom $w_f$-property, by \cite{Par}  and
\cite[Theorem~4.3.2]{Brianson-Maisonobe-Merle:LSDSWCT}, even for functions defined on spaces with non-isolated singularities.
We do not know whether or not these statements extend to map-germs $f \bar g$   in general. Perhaps this can be proved using D. Massey's work \cite{Massey} about real analytic Milnor fibrations and a  {{\L}}ojasiewicz  inequality.

The authors are grateful to Arnaud Bodin for several useful comments and joyful conversations.

\section{The theorem}

Let $U$ be an open neighbourhood of the origin $\0$ in $\R^{m}$ and let
 $X \subset U$ be a real analytic variety  of dimension $n > 0$
with an isolated
singularity at  $0$. Let $\tilde f : (U,\0) \to (\R^k,0)$ be a real analytic map-germ which is
 generically a submersion, {\it i.e.}, its Jacobian
 matrix $D \tilde f$ has rank $k$ on a
dense  open subset of $U$. We denote by $f$ the restriction of $\tilde f $
to $X$.
As usual, we say that $x \in X \setminus \{0\}$ is {\it a regular point}
of $f$ if $Df_x:T_xX \to \R^k$
is surjective, otherwise $x$ is {\it a critical point}. A point $y \in \R^k$ is {\it a regular value} of $f$ if there
is no critical point in $f^{-1}(y)$; otherwise $y$ is
{\it a critical value}. We say that $f$ has
{\it an isolated critical value} at $0 \in \R^k$ if there is a neighbourhood $\D_{\delta}$
of $0$ in $\R^{k}$ so that all points $y \in \D_{\delta} \setminus \{0\}$ are regular values
of $f$.

Now let $U$ and $X$ be as before,   and let
$\tilde f : (U,\0) \to (\R^k,0)$ be  a real analytic map-germ such that $f = \tilde f|_X$  has
 an isolated critical value at $0 \in \R^k$. We set $V = f^{-1}(0) = \tilde f^{-1}(0) \cap X$.
According to \cite {Hi, LT}, there exist  Whitney stratifications
 of $U$ adapted to $X$ and $V$. Let $(V_\a)_{\a \in A}$ be such an stratification.

\begin{definition}
The  Whitney stratification $(V_\a)_{\a \in A}$ satisfies
{\it the Thom} $a_f$-{\it condition} with respect to  $f$
 if for every pair of strata $S_\alpha, S_\beta$ such that
$S_\alpha \subset {\overline S}_\beta$ and  $S_\alpha \subset V$, one has that for every
sequence of points
$\{x_k \} \in S_\beta$ converging to a point $x$
 such that the sequence
of tangent spaces $T_{x_k}(f^{-1}(f(x_k)) \cap {S_\beta}) $ has a limit $T$, one has that $T$ contains the
tangent space of $S_\a$ at $x$. We say that $f$ has {\it the Thom property} if such an stratification exists.
\end{definition}

Notice that this condition is automatically satisfied for strata contained in $V$, since in that case this regularity condition simply becomes Whitney's $(a)$-regularity.

Thom's property for complex analytic maps was proved by Hironaka in
\cite[Section 5 Corollary~1]{Hi}  for all holomorphic maps into $\C$ defined
on arbitrary complex analytic varieties. We remark that the  critical values of holomorphic maps are automatically isolated, while for real analytic maps into $\R^2$ this is a hypothesis we need to impose. We  refer to \cite{PS2} for examples of maps $f \bar g$ with  isolated  critical values, and also for examples with non-isolated critical values.
Hironaka's theorem was an essential ingredient for L\^e's fibration theorem
in \cite{Le1}. The corresponding statement was shown by L\^e D\~ung Tr\'ang to be  false in general for complex analytic mappings into $\C^2$ (see L\^e's example, for instance in \cite[p. 290]{Sab}). Similarly,  there are real analytic map-germs into $\R^2$ which do not have the Thom Property. Here we prove:

\begin{theorem}\label{Theorem}
Let $(X,\0)$ be a germ of an $n$-dimensional complex analytic set with an  isolated singularity and let $f , g : (X,\0) \rightarrow (\C,0)$ be holomorphic map-germs such that $f \bar g$ has an isolated critical value at $0 \in \C$. Then the real analytic germ $f \bar g$ has the  Thom $a_{f \bar g}$-property.
\end{theorem}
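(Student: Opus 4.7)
The plan is to combine Hironaka's Thom property for holomorphic functions with a local analysis of $f\bar g$ after an embedded resolution. First, by Hironaka's theorem, I would fix a Whitney stratification $\mathcal{S}$ of a neighbourhood of $\0$ in $X$ adapted to $V(f)$ and $V(g)$ such that each of the holomorphic maps $f$, $g$, and $fg$ has the Thom $a$-property with respect to $\mathcal{S}$. The $a_{f\bar g}$-condition is automatic for pairs $(S_\alpha,S_\beta)$ with $S_\beta \subset V := V(f)\cup V(g)$; the only nontrivial case is that of pairs with $S_\alpha \subset V$ and $S_\beta$ disjoint from $V$. On such an $S_\beta$ both $f$ and $g$ are nowhere zero, and $\omega_f := df/f$ and $\omega_g := dg/g$ are regular complex $1$-forms there, yielding
\[
T_y\bigl((f\bar g)^{-1}(f\bar g(y)) \cap S_\beta\bigr) = \ker(\omega_f + \bar\omega_g)_y \cap T_y S_\beta,
\]
while the Thom property for $fg$ controls instead the kernel of $\omega_f+\omega_g$. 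The crucial tension is that swapping $\omega_g$ with $\bar\omega_g$ changes the fibre tangent space in a way not directly controlled by the Thom properties of $f$, $g$, and $fg$ taken separately.

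Suppose the $a_{f\bar g}$-condition fails. By the curve selection lemma one obtains a real analytic arc $\gamma \colon [0,\epsilon) \to X$ with $\gamma(0) = x_0 \in S_\alpha$, $\gamma(t) \in S_\beta$ for $t > 0$, along which the tangent spaces to the fibres of $f\bar g$ in $S_\beta$ converge in the Grassmannian to some $T$ with $T_{x_0}S_\alpha \not\subset T$. I would then take an embedded resolution $\pi \colon Y \to X$ of the divisor $V(fg)$, refined so that $E := \pi^{-1}(V(fg))$ is a simple normal crossings divisor and $\pi^{-1}(\mathcal{S})$ is compatible with the incidence stratification of $E$. Lifting $\gamma$ to $\tilde\gamma$ in $Y$ with $\tilde\gamma(0) = \tilde x_0 \in E$, choose local holomorphic coordinates $(z_1,\ldots,z_n)$ on $Y$ near $\tilde x_0$ in which $f\circ\pi = u\prod_i z_i^{m_i}$ and $g\circ\pi = v\prod_i z_i^{n_i}$ for holomorphic units $u,v$ and non-negative integers $m_i,n_i$. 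Hence $\tilde h := (f\bar g)\circ\pi = u\bar v \prod_i z_i^{m_i}\bar z_i^{n_i}$.

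The heart of the argument is to verify the Thom $a_{\tilde h}$-property for the incidence stratification of $E$ by a direct computation in this monomial model. From
\[
\frac{d\tilde h}{\tilde h} = \frac{du}{u} + \overline{\frac{dv}{v}} + \sum_{i} \Bigl( m_i \frac{dz_i}{z_i} + n_i \frac{d\bar z_i}{\bar z_i} \Bigr),
\]
one sees that, as a sequence in $Y \setminus E$ approaches a stratum $E_I^\circ = \{z : z_i = 0 \text{ for } i \in I,\; z_j \neq 0 \text{ for } j \notin I\}$, the terms with $i \in I$ and $m_i + n_i > 0$ dominate in $d\tilde h/\tilde h$ and force any Grassmannian limit of $\ker d\tilde h$ to annihilate $dz_i$ and $d\bar z_i$ for those $i$; thus the limit contains $T_{\tilde x_0} E_I^\circ$. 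Since $\pi$ is a local diffeomorphism on $Y_\beta := \pi^{-1}(S_\beta)$, pushing forward by $d\pi_{\tilde x_0}$ yields $T \supset d\pi_{\tilde x_0}(T_{\tilde x_0}\tilde S_\alpha)$, where $\tilde S_\alpha$ is the stratum of $Y$ containing $\tilde x_0$. To conclude $T \supset T_{x_0}S_\alpha$ and derive the contradiction, $d\pi_{\tilde x_0}$ must map $T_{\tilde x_0}\tilde S_\alpha$ onto $T_{x_0}S_\alpha$; this descent step is the main technical obstacle, since $d\pi$ can collapse tangent directions along the exceptional divisor, and so $\pi$ and $\tilde\gamma$ must be chosen carefully (or $\mathcal{S}$ refined along $S_\alpha$) to secure surjectivity of $d\pi|_{T\tilde S_\alpha}$.
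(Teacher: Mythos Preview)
Your approach via embedded resolution is genuinely different from the paper's, and it has two real gaps. First, the monomial computation upstairs is not correct as stated. You claim the dominant terms force the limit of $\ker d\tilde h$ to ``annihilate $dz_i$ and $d\bar z_i$''; that phrasing is backwards (it would mean the limit is \emph{contained in} $T_{\tilde x_0}E_I^\circ$, not that it \emph{contains} it), and more seriously, the dominance argument fails whenever $m_i=n_i$ for some $i\in I$. In that case the term $m_i\,dz_i/z_i+n_i\,d\bar z_i/\bar z_i=m_i\,d\log|z_i|^2$ is \emph{real}, so the imaginary part of $d\tilde h/\tilde h$ does not blow up in the $z_i$-direction at all; the limit of $\ker d\tilde h$ is then governed partly by the bounded form $\mathrm{Im}(du/u-dv/v)$, which need not vanish on $T E_I^\circ$. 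Already for $f=z_1$, $g=z_1z_2$ on $\C^2$ (so $f\bar g=|z_1|^2\bar z_2$, no resolution needed) one checks directly that the limit of the fibre tangent spaces at a generic point of $\{z_1=0\}$ is a real $2$-plane that does \emph{not} contain $T\{z_1=0\}$, so the $a_{f\bar g}$-condition fails for the incidence stratification. Second, the descent step you flag is a genuine obstacle, not a technicality: along exceptional components $d\pi$ can collapse everything, so surjectivity of $d\pi|_{T\tilde S_\alpha}$ onto $T S_\alpha$ cannot be arranged in general by refining $\mathcal S$ alone.

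The paper avoids resolution entirely. It embeds the problem into $\C^{n+1}$ via the auxiliary real hypersurface $G=\{(f\bar g)(z)-z_{n+1}^N=0\}$, takes a Whitney stratification $\sigma_N$ of $G$ adapted to $V\times\{0\}$, and works with the induced stratification $S_N$ on $V$. The key analytic input is the \L ojasiewicz gradient inequality applied separately to $f$ and to $g$: $\|\grad f\|\ge|f|^\theta$ and $\|\grad g\|\ge|g|^\theta$ near $\0$. A direct computation of the defining equation of $T_{x_k}G$ shows that the coefficient vector in the $\C^n$-directions dominates the $z_{n+1}$-coefficient $Nz_{n+1}^{N-1}$ once $N$ is large, and the paper uses this together with Whitney~(a) for $\sigma_N$ to derive a dimension contradiction if the $a_{f\bar g}$-condition were violated for $S_N$. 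There is no resolution and no pushforward of tangent spaces; the extra variable $z_{n+1}$ and the \L ojasiewicz exponent do all the work.
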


\begin{proof} The proof is inspired by the proof of Pham's theorem given in \cite{HL} (Theorem 1.2.1), which concerned holomorphic germs of functions defined on  complex analytic subsets of $\mathbbm  C^m$.

We first prove the theorem in  the case when the germ of $X$ at $\0$ is smooth,  {\it i.e.}, $X \cong \C^{n}$.

Let $U$ be an open neighbourhood of the origin $\0$ in $\C^{n}$ so that
 $f,g : U \rightarrow \C$  represent  the germs $f$ and $g$. We identify $\C^{n+ 1} \cong \C^n \times \C$ and denote by $(z_1, \ldots,z_{n+1})$ the coordinates in $\C^{n+1}$.

 Let us denote by $V$ the subset   in $\C^n$ with equation $fg =0$ and by  
 $Sing(f\bar g)$ the singular locus of $f \bar g$. Since $f \bar g$ has an isolated critical value at $0$, $Sing(f\bar g)$ is contained in $V$.

We need the following lemma:

\begin{lemma}   Let $G$ be the subset of $U \times \C$ with equation
$(f\bar g )(z_1,\ldots,z_n) - z_{n+1}^N = 0\,$,   $N \ge 1$.
Then the singular locus of $G$ is $Sing(f\bar g) \times \{0\}$.
\end{lemma}
\begin{proof} This follows by a straightforward computation of  the $2 \times 2(n+1)$ Jacobian matrix of $f \bar g - z_{n+1}^N$.
\end{proof}

Therefore, according to \cite{Wh} (just  as in \cite[1.2.4]{HL}  for the real analytic case), there exists a Whitney stratification $\sigma_N$ of $G$  such that  $G \cap (\C^n \times \{0\})= V  \times \{0\}$ is a union of strata and such that $G \setminus (V \times \{0\})$ is the union of the strata having dimension $2n$. We assume further that $\0$ is itself a stratum and that $U$ is chosen small enough so that every other stratum contains $\0$ in its closure.

Let $S_N$ be the stratification induced by $\sigma_N$ on $V \times \{0\}$. Adapting the arguments of \cite{HL}, we will prove that for $N$ sufficiently large, $S_N$ has the Thom condition with respect to $f \bar g$.

Assume that there exists a sequence of points $(x_k) = (z_1^{(k)}, \ldots, z_{n+1}^{(k)} )$ in $G \setminus (V \times \{0\})$ such that:

\begin{enumerate}
\item $\lim_{k \rightarrow \infty} x_k = x \in V \times \{0\}$,
\item If we set $t_k =  (f\bar g) (z_1^{(k)}, \ldots, z_{n}^{(k)} )$, then the sequence of $(2n-2)$-planes \\
$T_{x_k}\big((f\bar g)^{-1}(t_k) \times \{z_{n+1}^{(k)}\}\big)$ converges to a limit $T$ which does not contain the tangent space $T_xV_{\alpha}$ to the strata $V_{\alpha}$ of $S_N$ containing $x$.
\end{enumerate}

Moreover,  one can assume that  the sequence of $2n$-planes $T_{x_k}G$ converges to a limit $\tau$ since the Grassmanian of $2n$-planes in the Euclidian space is a compact manifold.

  For each $k$ one has
 $$T_{x_k}\big((f\bar g)^{-1}(t_k) \times \{z_{n+1}^{(k)}\}\big) \subset    T_{x_k}G,$$
 therefore $T \subset \tau$ and the intersection $\tau \cap (\C^n \times \{0\})$ has real dimension at least $2n-2$. Moreover, as $T_xV_{\alpha} \not\subset T$, one gets $T_xV_{\alpha} \neq \tau \cap \C^n \times \{0\}$.

But, since $\sigma_N$ satisfies Whitney's condition (a), one has $T_xV_{\alpha} \subset \tau$.   This implies that in fact  the intersection $\tau \cap (\C^n \times \{0\})$ has real dimension at least $2n -1$.
We will show that this is not possible if $N$ is sufficiently large.

According to  \cite{Lo}, there exists an open neighbourhood of $\0$ in $\C^n$ and a real number $\theta $,
$0<\theta  <1$, such that for each $z=(z_1,\ldots,z_n) \in \Omega$ one has :
$$\| (\hbox{grad} f)(z)\| \geq |f(z)|^{\theta} \ \hbox{ and } \| (\hbox{grad} g)(z)\| \geq |g(z)|^{\theta} $$
The Jacobian matrix of the map $f \bar g - z_{n+1}^N$ with respect to the
coordinates \\ $(z_1, \bar{z}_1, z_2, \bar{z}_2, \cdots, z_{n+1}, \overline{ z_{n+1}})$ in $\R^{2(n+1)}$ is the   $2\times 2(n+1)$ matrix  given in blocks by
$$D(f \bar{g})(z_1, \bar{z_1}, \ldots,z_{n+1}, \overline{z_{n+1}})=
 \left(
\begin{array}{ccccc}
M_1 & \ldots & M_i & \ldots & M_{n+1} \\
\end{array}
\right)\;,
$$
where for each $i = 1,\ldots,n$ the block $M_i$ is:
$$ M_i =  \left(
\begin{array}{cc}
  \frac{\partial (\Re(f \bar g))}{\partial z_i} &
\frac{\partial(\Re(f \bar g))}{\partial \bar z_i}
  \\
  &  \\
 \frac{\partial (\Im(f \bar g))}{\partial z_i} &
\frac{\partial(\Im (f \bar g))}{\partial \bar z_i}   \\
\end{array}
\right)\;,
$$
and
$$M_{n+1} = -\frac{N}{2} \left(
\begin{array}{cc}
z_{n+1}^{N-1} & \overline{z_{n+1}}^{N-1} \\ \, & \, \\
\frac{1}{i} z_{n+1}^{N-1}  & -\frac{1}{i}  \overline{z_{n+1}}^{N-1} \\
\end{array}
\right)\;.
$$
Then an easy computation leads to the following equation for the tangent space $T_{x_k}G$ at  $x_k= (z,z_{n+1}) \in G$ (we omit the $k$ in the coordinates in order to simplify the notations) :
$$
\sum_{i=1}^n \bigg( \frac{\partial f}{ \partial z_i}(z) \bar g (z)  v_i + \overline{\frac{\partial g}{ \partial z_i}}(z) \bar f (z) \overline{v_i} \bigg) -N z_{n+1}^{N-1} v_{n+1} = 0 \;.
$$

We consider the $2n$-vector appearing in the equation :
$$w_k(z) = \bigg( \frac{\partial f}{ \partial z_1}(z) \bar g (z),  \overline{ \frac{\partial g}{ \partial z_1}}(z), \ldots ,
\frac{\partial f}{ \partial z_n}(z) \bar g (z), \overline{ \frac{\partial g}{ \partial z_n}}(z) \bigg) \,.$$

For simplicity we omit to write that the functions below are evaluated at $(z)$.  We have:
$$\bigg(  \frac{\|w_k\|}{N|z_{n+1}|^{N-1}}\bigg)^2  =
   \frac{  |\bar g|^2 \sum_{i=1}^n \big|\frac{\partial f}{ \partial z_i} \big|^2 +|\bar f|^2 \sum_{i=1}^n \big|\frac{\partial g}{ \partial z_i} \big|^2  }  {N^2 |f \bar{g}|^{2 \frac{N-1}{N}}}  
    = 
\frac{   |\bar g|^2  \| \hbox{grad} f\|^2  +  |\bar f|^2  \| \hbox{grad} g\|^2}{N^2 |f \bar{g}|^{2 \frac{N-1}{N} }} \,.$$
Thus,
$$  \bigg(  \frac{\|w_k\|}{N|z_{n+1}|^{N-1}}\bigg)^2  = \frac{( |\bar g||\bar f|^{\theta})^2 +(|\bar f||\bar g|^{\theta})^2  }{ N^2 |f\bar g|^{2 \frac{N-1}{N}} }  \geq \frac{2}{N^2}  |f\bar g| ^{\theta - \frac{N-1}{N}} \,.$$

\medskip
When $N$ is sufficiently large, {\it i.e.}, $\theta - \frac{N-1}{N} <0$, one has :

$$  \lim_{k \rightarrow \infty}  \frac{\|w_k\|}{N|z_{n+1}|^{N-1}} = + \infty \,.$$
Therefore  the normalized limit of the vector $(w_k,  -N (z^{(k)}_{n+1})^{N-1})$ when $k \rightarrow \infty$, is a vector contained in $\C^n \times \{ 0\}$. Then the $2n$-plane $\tau$ contains the complex line $\{\0\} \times \C \subset \C^n \times \C$. This contradicts the fact that  $\tau \cap \C^n \times \{0\}$ has dimension at least $2n-1$. Thus, 
 if we set $t_k =  (f\bar g) (z_1^{(k)}, \ldots, z_{n}^{(k)} )$, then every sequence of $(2n-2)$-planes 
$T_{x_k}\big((f\bar g)^{-1}(t_k) \times \{z_{n+1}^{(k)}\}\big)$ that converges to a limit $T$ contains the tangent space $T_xV_{\alpha}$ to the strata $V_{\alpha}$ of $S_N$ containing $x$.  This completes the proof of the theorem when $X$ is smooth at $\0$.

 When $X \hookrightarrow \C^m$ has an
 isolated singularity at the origin, we take a Whitney stratification of a neighbourhood $U$ of $X$ in
$\C^m$ adapted to $X$ and to $V:= (f \bar g)^{-1}(0)$, and such that $\0$ is a stratum. We choose $U$ small enough such that any other stratum contains $\overline{0}$ in its closure. Now we consider a sequence of points $(x_k)$ in $X \setminus V$ converging to a point $x \in V$ and such that there is a limit $T$ of the corresponding sequence of spaces tangent to the fibers. If $x = \0$, then there is nothing to prove since $T$ contains the space tangent to this $0$-dimensional stratum. If $x \ne \0$, then we consider a coordinate chart $U_1$ for $X$ around $x$ and argue exactly as in the previous case, when $X$ was assumed to be smooth.
\end{proof}

We now look at 
  the corollaries. We know,  by Bertini-Sard's theorem in \cite{Ver}, that there is $\e >0$ such that all spheres in $\R^m$ centred at $\0$ with radius $\le \e$ meet  transversally each stratum in $\{f \bar g = 0 \}$.   Since  $f \bar g$ has Thom's $a_{f \bar g}$-property,  by Theorem \ref{Theorem}, we get that 
  given $\e >0$ as above, there exists $\delta >0$ sufficiently small with respect to $\e$ such that all fibers $(f \bar g)^{-1}(t)$ with $\vert t \vert \le \delta$ are transversal to the link $\mathcal L_X$. As usual, following the proof of  Ehresmann's fibration theorem (see for instance \cite{Mi2, Le1, PS2}),  this implies that 
  one has a locally trivial fibration $\,  N(\e,\delta) \buildrel {f} \over {\longrightarrow} \hbox {Image}(f \bar g) \subset \D_\delta \setminus \{0\} \,,
$
where  $N(\e,\delta) := [(f \bar g)^{-1}(\D_\delta \setminus \{0\})] \cap \B_\e  $
is a solid Milnor tube for $f \bar g$. 
Thus to complete the proof of Corollary 1 we must show that the image of $f \bar g$ covers all of $\D_\delta \setminus \{0\}$. This follows from the lemma below.

\begin{lemma} Let
$X, f$ and $g$ be as above, so that $f \bar g$ is not constant and it has an isolated critical value at $0 = f \bar g(\0)$. 
Then the germ $f \bar g$ is locally surjective at $\0$.
\end{lemma}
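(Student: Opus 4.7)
The plan is to show that the image $I := f\bar g(\B_\e)$ contains the full punctured disk $\D_\delta \setminus \{0\}$ for suitable $\e \gg \delta > 0$, by exhibiting $I \cap (\D_\delta \setminus \{0\})$ as a non-empty clopen subset of the connected set $\D_\delta \setminus \{0\}$.

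First one chooses $\e > 0$ small enough that every sphere $\s_{\e'}$, $0 < \e' \le \e$, meets each stratum of a Whitney stratification of $X$ adapted to $V = (f\bar g)^{-1}(0)$ transversally, by Bertini--Sard \cite{Ver}. Using Theorem \ref{Theorem}, one then selects $\delta > 0$ sufficiently small with respect to $\e$ so that every fiber $(f\bar g)^{-1}(t)$ with $0 < |t| \le \delta$ is transversal to $\s_\e$; this is exactly the step used in the paragraph just above to derive Corollary 1. Since $0$ is an isolated critical value, $f\bar g$ is a submersion on $\B_\e \setminus V$, so $I \setminus \{0\}$ is open in $\C \setminus \{0\}$; hence $I \cap (\D_\delta \setminus \{0\})$ is open in $\D_\delta \setminus \{0\}$.

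The key step is to verify that $I \cap (\D_\delta \setminus \{0\})$ is also closed in $\D_\delta \setminus \{0\}$. Given a sequence $t_k \in I$ converging to $t_0 \in \D_\delta \setminus \{0\}$, pick preimages $z_k \in \B_\e$ and, after passing to a subsequence, obtain $z_k \to z_\infty \in \overline{\B_\e}$. If $z_\infty \in \B_\e$ then $t_0 = f\bar g(z_\infty) \in I$ by continuity; otherwise $z_\infty \in \s_\e$ and $f\bar g(z_\infty) = t_0 \ne 0$, so that $z_\infty \in (f\bar g)^{-1}(t_0) \cap \s_\e$, and the transversality of this fiber to $\s_\e$ forces the fiber to cross $\s_\e$ at $z_\infty$, producing points of $(f\bar g)^{-1}(t_0) \cap \B_\e$ arbitrarily close to $z_\infty$, so $t_0 \in I$ again.

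Since $f\bar g$ is not constant, it attains nonzero values arbitrarily close to $\0$, so $I \cap (\D_\delta \setminus \{0\})$ is non-empty; being a non-empty clopen subset of the connected set $\D_\delta \setminus \{0\}$, it equals the whole punctured disk, which is the desired local surjectivity. The only delicate point I expect is the transversality invoked in the closedness step when $z_\infty$ sits on $\s_\e$ away from $\0$; this is precisely the content of the Thom $a_{f\bar g}$-property from Theorem \ref{Theorem}, combined with the Whitney stratification of $V$, and is handled in exactly the same way as in the derivation of Corollary 1 above.
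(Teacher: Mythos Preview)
Your argument is correct and follows a genuinely different route from the paper. The paper's proof is complex-analytic and does not invoke Theorem~\ref{Theorem} at all: after cancelling any common factor of $f$ and $g$, it shows that the holomorphic map $(f,g):(\C^n,\0)\to(\C^2,0)$ is locally surjective --- for $n=2$ because $(f,g)$ is then a finite morphism, a branched cover over its discriminant curve, and for $n\ge 3$ by restricting to a generic complex $2$-plane and reducing to the previous case --- whence $(f,\bar g)$, and therefore the product $f\bar g$, is locally surjective. Your approach is instead topological and recycles the transversality already set up for Corollary~1: the isolated-critical-value hypothesis gives openness of the image, the Thom property supplies closedness via transversality of the nearby fibres to $\s_\e$, and connectedness of the punctured disc finishes. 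The paper's argument has the virtue of being logically independent of Theorem~\ref{Theorem} and of exploiting the specific holomorphic structure of $f$ and $g$; yours has the virtue of isolating a general principle valid for any non-constant real analytic germ into $\R^2$ with an isolated critical value, the Thom property, and fibres of positive dimension.
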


\begin{proof} If either $f$ or $g$ is constant, the statement in this lemma is a well-known property of holomorphic mappings. So we  assume  none of these maps is constant, neither is constant the map $f \bar g$. We may further assume that $f,g$ have no common factor, for otherwise we may divide both map-germs by that common factor and this will not change the image of the map $f \bar g$. We claim that 
since $f$ and $g$ are both holomorphic, we have that the  map-germ
$$(f, g) : \C^n \to \C \times \C $$
is locally surjective for all $n \ge 2$. That is, the image of every neighbourhood of $\0 \in \C^n$ covers a neighbourhood of $(0,0) \in \C \times \C$. In fact, for $n=2$ the map germ $(f,g)$ is a finite morphism, 
which is a finite covering map with ramification locus the discriminant
curve; so it is locally surjective. When $n \ge 3$ we may consider a generic complex 2-plane $\mathcal P$ in $\C^n$ which is transversal to the fibers of $(f,g)$
 and  apply the above arguments. Hence $(f, \bar g)$ is locally surjective, and so is $ f \bar g$.
\end{proof}

There are in \cite{CSS3}  examples of  analytic map-germs $(\R^n,\0) \buildrel{h} \over {\to} (\R^2,0)$ with an isolated critical value at $0$ which are not surjective. The image of $h$
misses a neighbourhood of an arc converging to $0$.
\medskip

The proof of Corollary 2 is just as that of Theorem 1.3 in \cite{PS2}, replacing the Milnor tube $[(f \bar g)^{-1}(\partial \D_\delta)] \cap \B_\e  $ by the solid Milnor tube $ [(f \bar g)^{-1}(\D_\delta \setminus \{0\})] \cap \B_\e  $, so we leave the details to the reader.
 (Compare with the first part of the proof of Theorem 1 in \cite{CSS2}).

\vskip0,3cm
\medskip

\medskip

Anne Pichon: Aix-Marseille Universit\'e, Institut de Math\'ematiques de Luminy UMR 6206 CNRS, Campus
de Luminy - Case 907, 13288 Marseille Cedex 9, France
\par\noindent
pichon@iml.univ-mrs.fr
\vskip0,2cm                                                                                                                                                                                                                     \medskip
Jos\'e Seade: Instituto de Matem\'aticas, Unidad Cuernavaca,
Universidad Nacional Aut\'onoma de M\'exico, A. P. 273-3,
Cuernavaca, Morelos, M\'exico.
\par\noindent
 jseade@matcuer.unam.mx

\end{document}